\documentclass[12pt]{article}

\usepackage{amssymb}%
\usepackage{amsmath}%
\usepackage{amsthm}%
\usepackage{url}

\usepackage{fullpage}%
\usepackage{xcolor}%

\allowdisplaybreaks%

{\theoremstyle{plain}%
 \newtheorem{theorem}{Theorem}
 \newtheorem{corollary}{Corollary}

}
{\theoremstyle{remark}

}
{\theoremstyle{definition}

\newtheorem{example}{Example}
}

\begin{document}

\begin{center}
{\large Hyperbolic summations derived using the Jacobi functions dc and nc}

 \ 

 John M.\ Campbell
\end{center}

\begin{abstract}
 We introduce a method that is based on Fourier series 
 expansions related to Jacobi elliptic functions and that we apply to determine new identities for evaluating 
 hyperbolic infinite sums in terms of the complete elliptic integrals $K$ and $E$. We apply our method to determine generalizations of a family of 
 $ \text{sech}$-sums given by Ramanujan and generalizations of a family of $\text{csch}$-sums given by Zucker. Our method has the advantage 
 of producing evaluations for hyperbolic sums with sign functions that have not previously appeared in the literature on hyperbolic sums. We apply 
 our method using the Jacobian elliptic functions $\text{dc}$ and $\text{nc}$, together with the elliptic alpha function, to obtain new closed forms for 
 $q$-digamma expressions, and new closed forms for series related to discoveries due to Ramanujan, Berndt, and others. 
\end{abstract}

\noindent {\footnotesize Keywords: Jacobi elliptic function, elliptic integral, hyperbolic function, 
 $\Gamma$-function, symbolic evaluation, elliptic alpha function}

\vspace{0.05in}

\noindent {\footnotesize Mathematics Subject Classification: 42A16, 33E05}

\section{Introduction}
The Jacobi elliptic functions are defined via the inversion of the elliptic integral 
\begin{equation}\label{actualF}
 u = F(\phi, k) = \int_{0}^{\phi} \frac{dt}{\sqrt{1 - k^2 \sin^2 t}} 
\end{equation}
 of the first kind, letting $0 < k^2 < 1$. The expression 
 $k = \text{mod} \, u$ is referred to as the \emph{elliptic modulus}, and $\phi = \text{am}(u, k) = 
 \text{am}(u)$ is referred to as the \emph{Jacobi amplitude}. The \emph{Jacobi elliptic functions} 
 $\text{sn}$, $\text{cn}$, and $\text{dn}$ 
 may be defined as follows: 
\begin{align}
 \sin \phi & = \text{sn}(u, k), \label{definitionsn} \\
 \cos \phi & = \text{cn}(u, k), \ \ \ \text{and} \label{definitioncn} \\ 
 \sqrt{1 - k^2 \sin^2 \phi} & = \text{dn}(u, k). \label{definitiondn} 
\end{align}
 In this article, we apply Fourier series 
 expansions related to Jacobi elliptic functions to build on the work on 
 hyperbolic infinite sums due to Zucker \cite{Zucker1979}, 
 Yakubovich \cite{Yakubovich2018}, and Xu \cite{Xu2018}. 
 The relevance of 
 Zucker's work in \cite{Zucker1979} 
 (cf.\ \cite{Zucker1984}) within high-energy physics \cite{BourdierDrukkerFelix2016,BourdierDrukkerFelix2015,Drukker2015}, nuclear physics 
 \cite{DowkerKirsten2002}, electrostatics \cite{Lekner2010}, the study of lattice sums \cite{BorweinGlasserMcPhedranWanZucker2013}, number-theoretic 
 subjects in the study of Fibonacci sums 
 \cite{DuverneyNishiokaNishiokaShiokawa1997,DuverneyShiokawa2008,ElsnerShimomuraShiokawa2007,ElsnerShimomuraShiokawa2012,ElsnerShimomuraShiokawa2008}, 
 and number-theoretic areas concerning Ramanujan's series for $\frac{1}{\pi}$ \cite{BagisGlasser2012,BagisGlasser2013,BorweinBorwein1987Indian} 
 and for constants such as Ap\'{e}ry's constant \cite{Vepstas2012} 
 serve as a main source of motivation behind the techniques and results introduced 
 in this article. 

\subsection{Background and further preliminaries}
 The complete elliptic integral of the first kind $K$ is such that 
 $K(k) = F\left( \frac{\pi}{2}, k \right)$. 
 We may also write $K(k) = K$ and $K' = K'(k) = K(k')$, with $k' = \sqrt{1 - k^2}$, 
 and similarly with respect to the complete elliptic integral
 $ E(k) = \int_{0}^{\frac{\pi}{2}} \sqrt{1 - k^2 \sin^{2} \theta} \, d\theta $ 
 of the second kind. 

 Combinations involving the $\text{sn}$, $\text{cn}$, and $\text{dn}$ functions give us the remaining Jacobi elliptic functions, as listed below: 
\begin{align}
 \text{cd}(u, k) 
 & = \frac{\text{cn}(u, k)}{\text{dn}(u, k)}, \label{cddefinition} \\ 
 \text{cs}(u, k) 
 & = \frac{\text{cn}(u, k)}{\text{sn}(u, k)}, \nonumber \\ 
 \text{dc}(u, k) 
 & = \frac{\text{dn}(u, k)}{\text{cn}(u, k)}, \label{dcdefinition} \\ 
 \text{ds}(u, k) 
 & = \frac{\text{dn}(u, k)}{\text{sn}(u, k)}, \nonumber \\
 \text{nc}(u, k) 
 & = \frac{1}{\text{cn}(u, k)}, \nonumber \\
 \text{nd}(u, k) 
 & = \frac{1}{\text{dn}(u, k)}, \nonumber \\
 \text{ns}(u, k) 
 & = \frac{1}{\text{sn}(u, k)}, \nonumber \\ 
 \text{sc}(u, k) 
 & = \frac{\text{sn}(u, k)}{\text{cn}(u, k)}, \ \ \ \text{and} \nonumber \\
 \text{sd}(u, k) 
 & = \frac{\text{sn}(u, k)}{\text{dn}(u, k)}. \nonumber 
\end{align}

  The $\Gamma$-function \cite[\S8]{Rainville1960} is famous and ubiquitous as a special function,   and is to frequently arise in our work.   We recall that this  
 special function  is defined for $\Re(x) > 0$  with the Euler integral  $\Gamma(x)=\int_{0}^{\infty} u^{x-1}e^{-u}\,du $ \cite[\S8]{Rainville1960}.   A 
 useful feature concerning our method in Section \ref{subsectiontechnique}  is given by how this method may be used to obtain new  evaluations for  
 expressions involving the $q$-digamma function $\psi_{q}(z)$, which may be defined so that  $$ \psi_{q}(z) = \frac{1}{\Gamma_{q}(z)}  
  \frac{\partial \Gamma_{q}(z)}{ \partial z}, $$  where $\Gamma_{q}$ denotes the $q$-analogue of the $\Gamma$-function. Equivalently, we may  define the  
  $q$-digamma function so that  
\begin{equation}\label{qdigamma}
 \psi_{q}(z) = -\ln(1 - q) + \ln q \sum_{n=0}^{\infty} \frac{q^{n+z}}{1 - q^{n+z}}. 
\end{equation}

 In Ramanujan's second notebook \cite[\S17]{Berndt1991} (cf.\ \cite{Berndt2016}), identities for evaluating 
\begin{equation}\label{Ramanujanmainmotivate}
 \sum _{n = 0}^{\infty} (-1)^n (2 n + 1)^{s} 
 \text{sech}\left(\frac{2 n + 1}{2} \frac{K'}{K} \pi \right) 
\end{equation}
 in terms of $K$ are given, such as the identity 
 \cite[p.\ 134]{Berndt1991} 
\begin{equation}\label{instanceRamanujan}
 \sum _{n = 0}^{\infty} (-1)^n (2 n+1) 
 \text{sech}\left( \frac{2 n + 1}{2} \frac{K'}{K} \pi \right) 
 = \frac{2 k k' K^2}{\pi ^2}. 
\end{equation}
 In Zucker's seminal article on hyperbolic sums \cite{Zucker1979}, an identity 
 for evaluating \eqref{Ramanujanmainmotivate} was also included, 
 and \cite{Zucker1979} also provided identities for the sums given by 
 replacing $\text{sech}$ with $\text{csch}$ 
 in Ramanujan's sums in \eqref{Ramanujanmainmotivate}: 
\begin{equation}\label{Zuckermain}
 \sum_{n=0}^{\infty} (-1)^{n} 
 (2n+1)^{s} \text{csch}\left( \frac{2n+1}{2} \frac{K'}{K} \pi \right) = 2 J_{s}, 
\end{equation}
 with, for example, 
 $$ 4 J_{0} = \left( \frac{2 K }{\pi} \right) k \ \ \ \text{and} \ \ \ 
 4 J_{2} = \left( \frac{2 K}{\pi} \right)^{3} k (1 - k^2). $$ 
 Zucker's methods in \cite{Zucker1979} mainly relied on 
 double series manipulations together with expansions such as 
 $$ J_{s}(c) = J_{s} = \sum_{n=1}^{\infty} \frac{ (-1)^{n+1} (2n-1)^{s} q^{n - 1/2} }{1 - q^{2n-1}}, $$ writing 
\begin{equation}\label{nome}
 q = e^{- \pi \frac{K'}{K}} 
\end{equation}
 to denote the \emph{nome} for Jacobian elliptic functions. 
 In this article, we provide a method that may be used to 
 to evaluate the members of both of the families of generalizations of 
 \eqref{Ramanujanmainmotivate} and \eqref{Zuckermain} indicated as follows: 

\begin{enumerate}

\item The sums obtained by 
 replacing $\text{sech}$ (resp.\ $\text{csch}$) with the higher power $\text{sech}^2$ (resp.\ $\text{csch}^2$), 
 for all odd powers $s$, 
 in \eqref{Ramanujanmainmotivate} (resp.\ \eqref{Zuckermain}); and 

\item The sums given by replacing the sign function $(-1)^{n}$ 
 with the sign function $(-1)^{ \left\lfloor \frac{n}{2} \right\rfloor}$ 
 within the sums indicated in the preceding point. 

\end{enumerate}

 Our evaluation technique may be applied much more broadly, apart from the families of hyperbolic sums indicated above. 
 For example, our method may also be applied to produce new summations that resemble and are related 
 to the summations 
 $$ \sum_{n=1}^{\infty} \frac{n}{e^{2 \pi n} - 1} = \frac{1}{24} - \frac{1}{8 \pi} 
 \ \ \ \text{and} \ \ \ \sum_{n=1}^{\infty} \frac{n^{13}}{e^{2 n \pi} - 1} = \frac{1}{24} $$ 
 due to Ramanujan \cite[pp.\ 326, xxvi]{Ramanujan1927}. 
 These Ramanujan summations 
 have been explored by 
 authors such as Nanjundiah \cite{Nanjundiah1951} and Sandham \cite{Sandham1950}. 
 As in \cite{Nanjundiah1951}, we record that Ramanujan had given the first out of the above identities 
 in his seminal article on modular equations and approximations to $\pi$ \cite{Ramanujan1915}. 

 As in Yakubovich's article \cite{Yakubovich2018}, our work is a continuation of the methods due to Ling and Zucker \cite{Ling1988,Ling1975,Zucker1979}. 
 Yakubovich's symbolic forms as in 
 $$ \frac{\Gamma^4 \left(\frac{1}{4}\right)}{32 \pi ^4} + 
 \frac{\Gamma^8 \left(\frac{1}{4}\right)}{512 \pi ^6} - 
 \frac{1}{8 \pi ^2} = \sum _{n=1}^{\infty} n^2 \cosh (\pi n) \text{csch}^2(\pi n) $$ 
 for infinite series involving $\text{csch}^{2}$ 
 were highlighted as Corollaries in 
 \cite{Yakubovich2018} and motivate 
 our symbolic forms for $\text{csch}^{2}$-sums as in 
 Examples \ref{202211053AAAAAAAAAAAAAAA2252PLUYS2PMSTAR1A} and 
 \ref{20221102339AM1A} below. 

\subsection{Main technique}\label{subsectiontechnique}
 Our main technique may be summarized in the following manner, letting $\text{j}(u, k)$ denote a Jacobi elliptic function. 

\begin{enumerate}

 \item Start with the Fourier series expansion for $\text{j}(u, k)$ 
 or for some expression involving a Jacobi elliptic function, 
 or some manipulation of such Fourier series expansions such as a series expansion obtained via term-by-term 
 applications of a differential operator; 

 \item In order to use built-in CAS algorithms for reducing, if possible, derivatives of $q$-powers 
 with respect to the elliptic modulus and using hyperbolic functions, we need to 
 rewrite the summand of the series indicated in the previous step 
 so that any exponential expressions only appear in the denominator, 
 and we need to simplify the powers for any such expressions; 

 \item Enforce a substitution such as $u \mapsto 2 w K $ for a variable $w$; 

 \item Argue, if possible, that if $w$ were to be set to 
 some special value, the series obtained from the third step 
 would reduce to a closed-form evaluation; and 

\item If the resultant summand is non-vanishing, and if differentiating with respect to the elliptic modulus yields a summand that may be expressed in 
 terms of hyperbolic functions, then simplify the resultant summand. 

\end{enumerate}

\subsection{Organization of the article}\label{subsectionOrganization}
 The above technique may be applied to many out of 
 the 12 of the Jacobian functions among $\text{cd}$, $\text{cn}$, $\text{cs}$, 
 $\text{dc}$, dn, ds, nc, nd, ns, sc, sd, and sn. 
 However, we sometimes obtain equivalent results by applying our technique to different Jacobian elliptic functions. 
 For example, if we apply this technique to $\text{ns}$, then the results that we would obtain 
 would also be obtainable 
 by applying our technique to $\text{dc}$. 
 The results in this article in Sections \ref{SectionMain} and 
 \ref{Mainsectionnc} are devoted to the application of our technique to $\text{dc}$ and 
 and $\text{nc}$, and if we were to attempt to apply the same method to Jacobi elliptic functions other than 
 $\text{dc}$ and and $\text{nc}$, this method would either not be applicable, or 
 we would obtain the same results as in Sections \ref{SectionMain} and \ref{Mainsectionnc} below. 

 Our method may also be applied to Fourier series expansions for expressions other than the 12 Jacobi functions we 
 have listed. For example, an equivalent formulation of the $q$-expansion 
\begin{equation}\label{nssquared}
 \left(\frac{2 K }{\pi }\right)^2 \text{ns}^2(2 
 K w,k) 
 = \frac{ 4 K ( K - E )}{\pi ^2}+\csc ^2(\pi 
 w)-8 \sum _{n=1}^{\infty} 
 \frac{ n q^{2 n} \cos (2 n \pi w)}{1 - 
 q^{2 n}} 
\end{equation}
 for $\text{ns}^{2}$ which, as recorded in \cite{Zucker1979}, 
 was included in the classic text \cite[p.\ 535]{WittakerWatson1927} 
 and dates back to the work of Jacobi \cite{Jacobi1829}, 
 may be applied in accordance with the technique in 
 Section \ref{subsectiontechnique}. 
 This is briefly considered in Section \ref{sectionpowers}. 

\section{Applications of the Jacobi elliptic function $\text{dc}$}\label{SectionMain}
 The connections between the Jacobi elliptic functions and the generalized Bessel functions, as explored in 
 \cite{DattoliChiccoliLorenzuttaMainoRichettaTorre1992}, serve as a further source of motivation concerning our applications related to series expansions 
 given in \cite{DattoliChiccoliLorenzuttaMainoRichettaTorre1992}. To begin with, we record the Fourier series expansion 
\begin{equation}\label{qdc}
 \text{dc}(u, k) = \frac{\pi}{2 K} \sec v + \frac{2 \pi}{K} \sum_{n=0}^{\infty} \frac{ (-1)^n q^{2n+1} }{1-q^{2n+1}}
 \cos((2n+1) v) 
\end{equation}
 given in \cite{DattoliChiccoliLorenzuttaMainoRichettaTorre1992} 
 and in classic texts such as \cite[pp.\ 511--512]{WittakerWatson1958}, 
 writing $ v = \frac{\pi u}{2 K}$ and recalling \eqref{nome}. 
 Following the evaluation technique in Section \ref{subsectiontechnique}, 
 we rewrite the above series expansion for $\text{dc}$ as follows: 
 \begin{equation}\label{dcFouriery}
 \text{{dc}}(2 K w, k) 
 = \frac{\pi}{2 K} \left( \frac{1}{\cos(\pi w)} 
 + 4 \sum_{n=0}^{\infty} \frac{(-1)^n}{e^{(2n+1) \pi \frac{K'}{K} }-1} \cos((2n+1) \pi w) \right). 
\end{equation}
 Setting $w = 0$, \eqref{dcFouriery} this gives us that 
\begin{equation}\label{Bagisformula}
 \frac{K}{2\pi} - \frac{1}{4}
 = \sum_{n=0}^{\infty} \frac{ (-1)^n }{ e^{ (2n+1) \pi \frac{K'}{K} } - 1}, 
\end{equation} 
 and the formula in \eqref{Bagisformula} 
 was also proved by Bagis in \cite{Bagis2019} (Corollary 1), using an identity due to Ramanujan 
 \cite[p.\ 174]{Berndt1991}. 
 Applying term-by-term differentiation to \eqref{Bagisformula} with respect to the elliptic modulus, 
 we may obtain that 
\begin{equation}\label{20221aaaaaaa10AAAA55PLUSSTAR33PM1A}
 \frac{2 K^2 \left(E+\left(k^2-1\right) K \right)}{\pi ^2 (K' (E-K) + E' K)} = 
 \sum _{n=0}^{\infty} (-1)^n (2 n+1) \text{{csch}}^2\left( \frac{2 n + 1}{2} \frac{K'}{K} \pi \right). 
\end{equation}
 We intend to generalize \eqref{20221aaaaaaa10AAAA55PLUSSTAR33PM1A}, 
 by analogy with Zucker's sum in \eqref{Zuckermain}. 

\subsection{Generalizing Zucker's sum}\label{202219999961061110PM1A}
 To generalize Zucker's sum in \eqref{Zuckermain} by replacing 
 $\text{csch}$ with $\text{csch}^{2}$ in \eqref{Zuckermain}, 
 we apply our technique indicated in Section \ref{subsectiontechnique}, as in the below proof. 

\begin{theorem}\label{202211707767171777755AM1AAAA}
 The equality 
 $$ \frac{1}{4}+\frac{2 \left(k^2-1\right) K^3}{\pi ^3} 
 = \sum_{n = 0}^{\infty} \frac{(-1)^n (2 n+1)^2}{e^{ (2 n+1) \frac{K'}{K} \pi } - 1} $$ 
 holds if the above series converges. 
\end{theorem}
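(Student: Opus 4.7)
The plan is to apply the technique of Section \ref{subsectiontechnique} by taking the second derivative of the Fourier expansion (\ref{dcFouriery}) with respect to $w$ and then specializing to $w=0$. Evaluating $\cos((2n+1)\pi w)$ at $w=0$ collapses each Fourier coefficient to $(-1)^n$, and the factor $(2n+1)^2$ produced by differentiating twice is precisely what is needed to match the exponent in the target series, so this is the natural analogue of the first-derivative argument used to produce (\ref{20221aaaaaaa10AAAA55PLUSSTAR33PM1A}).

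Carrying out the differentiation on the right-hand side, I would compute $\frac{d^2}{dw^2}\sec(\pi w)\big|_{w=0}=\pi^2$ directly from $(\sec(\pi w))'' = \pi^2\sec^3(\pi w)+\pi^2\sec(\pi w)\tan^2(\pi w)$. Each term in the sum contributes $-(2n+1)^2\pi^2(-1)^n/(e^{(2n+1)\pi K'/K}-1)$, and multiplying by the prefactor $\pi/(2K)$ gives
\begin{equation*}
 \left.\frac{d^2}{dw^2}\text{RHS of }\eqref{dcFouriery}\right|_{w=0}
 = \frac{\pi^3}{2K} - \frac{2\pi^3}{K}\,S,
\end{equation*}
where $S$ denotes the sum in the statement of the theorem.

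For the left-hand side, I would compute $\frac{d^2}{dw^2}\text{dc}(2Kw,k)\big|_{w=0}=4K^2\,\text{dc}''(0,k)$ using the standard derivative rules. From $\frac{d}{du}\text{dc}(u,k)=(1-k^2)\,\text{sn}(u,k)/\text{cn}^2(u,k)$ together with $\text{sn}'=\text{cn}\,\text{dn}$ and $\text{cn}'=-\text{sn}\,\text{dn}$, one differentiates once more to obtain $\frac{d^2}{du^2}\text{dc}(u,k)=(1-k^2)\,\text{dn}(u,k)\bigl(\text{cn}^2(u,k)+2\,\text{sn}^2(u,k)\bigr)/\text{cn}^3(u,k)$. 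Substituting $\text{sn}(0,k)=0$ and $\text{cn}(0,k)=\text{dn}(0,k)=1$ gives $\text{dc}''(0,k)=1-k^2$, so the LHS at $w=0$ is $4K^2(1-k^2)$. Equating the two sides and solving for $S$ yields $S=\tfrac14-2K^3(1-k^2)/\pi^3$, which is the claimed closed form.

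The only real obstacle is justifying the term-by-term second differentiation, but this is mild: the Fourier coefficients decay like $e^{-(2n+1)\pi K'/K}$, so the twice-differentiated series retains its exponential decay in $n$ and converges uniformly on a neighborhood of $w=0$. Thus interchange of summation and differentiation is legitimate, and no convergence issue on the side of the original series arises since the theorem is stated under the hypothesis that the target series converges.
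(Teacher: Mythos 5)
Your proposal is correct and is essentially the paper's own argument: the paper likewise applies a second derivative to the Fourier expansion \eqref{qdc} of $\mathrm{dc}$, computes $\mathrm{dc}''$ via the Jacobi differential equations \eqref{DE1}--\eqref{DE3} (obtaining the same expression $(1-k^2)\,\mathrm{dn}\,(\mathrm{cn}^2+2\,\mathrm{sn}^2)/\mathrm{cn}^3$, written there with $\mathrm{dn}^2-k^2\mathrm{cn}^2$ in place of $1-k^2$), and then sets $u=2wK$ and $w=0$; differentiating in $w$ rather than $u$ only changes things by the chain-rule factor $4K^2$, which cancels as you show. Your closing remark justifying the term-by-term differentiation is a small bonus over the paper's tacit interchange of limits.
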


\begin{proof}
 We again begin with the Fourier series expansion for $\text{dc}$ in \eqref{qdc}. 
 We rewrite the left-hand side of \eqref{qdc} according to \eqref{dcdefinition}, 
 and then apply the operator $\frac{d^{2}}{du^{2}}$ to both sides of the resultant equality. 
 We then expand the left-hand according to the following differential equations: 
\begin{align}
 \frac{d \, \text{sn} u}{du} & = \text{cn} u \, \text{dn} u, \label{DE1} \\
 \frac{d \, \text{cn }u}{du} & = -\text{sn} u \, \text{dn} u, \ \ \ \text{and} \label{DE2} \\
 \frac{d \, \text{dn} u}{du} & = -k^2 \text{sn} u \, \text{cn} u, \label{DE3} 
\end{align}
 After much simplification and rearrangement using \eqref{DE1}--\eqref{DE3}, we can show that 
\begin{equation}\label{donotrepeat1}
 \frac{\text{dn}(u,k) \left(\text{dn}^2(u, k) - 
 k^2 \text{cn}^2(u, k)\right) \left(\text{cn}^2(u, 
 k)+2 \text{sn}^2(u, k)\right)}{\text{cn}^3(u, k)} 
\end{equation}
 equals 
\begin{equation}\label{donotrepeat2}
 \frac{\pi ^3 \left(3-\cos \left(\frac{\pi u}{K}\right)\right) \sec ^3\left(\frac{\pi u}{2 K}\right)}{16 K^3}-\frac{\pi ^3 
 }{2 K^3} \sum _{n=0}^{\infty} 
 \frac{ (-1)^n q^{2 n+1} (2 n+1)^2 \cos \left(\frac{(2 n 
 + 1) \pi u}{2 K}\right)}{1 - q^{2 n+1}}, 
\end{equation}
 reversing the order of the limiting operations $\frac{d^{2}}{du^{2}}$ 
 and $\sum_{n=0}^{\infty} \cdot$. 
 Following the steps in Section \ref{subsectiontechnique}, we set 
 $u \mapsto 2 w K$. Setting $w = 0$ then gives us an equivalent formulation of the desired result. 
\end{proof}

 We let elliptic singular values be denoted in the usual way \cite[p.\ 298]{BorweinBorwein1987Pi}, writing 
\begin{equation}\label{ellipticsingular}
 \frac{K'(k_{r})}{ K(k_{r}) } = \sqrt{r}, 
\end{equation}
 where the elliptic lambda function is such that 
\begin{equation}\label{ellipticlambda}
 \lambda^{\ast}(r) = k_{r}, 
\end{equation}
 with expressions of the form $K(k_{r})$ admitting explicit symbolic evaluations for natural numbers $r \in \mathbb{N}$. Although the result highlighted as 
 Theorem \ref{202211707767171777755AM1AAAA} 
 is to be applied to obtain new hyperbolic summations, we are to apply this same result using the relations in 
 \eqref{ellipticsingular} and \eqref{ellipticlambda}. 

\begin{example}
 Setting $r = 1$ in \eqref{ellipticsingular}, from the symbolic forms 
\begin{equation}\label{20221102253PM4AAAAApp}
 \lambda^{\ast}(1) = k_{1} = \frac{\sqrt{2}}{2} 
\end{equation}
 and 
\begin{equation}\label{Kk1}
 K(k_{1}) = \frac{\Gamma^{2}\left( \frac{1}{4} \right)}{4 \sqrt{\pi}}, 
\end{equation}
 this gives us, from Theorem \ref{202211707767171777755AM1AAAA}, that 
 $$ \frac{1}{4}-\frac{\Gamma^6 \left(\frac{1}{4}\right)}{64 \pi ^{9/2}} 
 = \sum _{n=0}^{\infty} \frac{(-1)^n (2 n+1)^2}{e^{(2 n+1) \pi }-1}. $$
\end{example}

\begin{example}
 Setting $r = 4$ in \eqref{ellipticsingular}, the symbolic forms
\begin{equation} \label{202219190896710758PM1A}
 \lambda^{\ast}(4) = k_{4} = 3-2 \sqrt{2}, 
\end{equation}
 and 
\begin{equation}\label{Kk4}
 K(k_{4}) = \frac{\left(1+\sqrt{2}\right) \Gamma^2 \left(\frac{1}{4}\right)}{8 \sqrt{2 \pi }} 
\end{equation}
 give us, via Theorem 
 \ref{202211707767171777755AM1AAAA}, that 
$$ \frac{1}{4}-\frac{\left(1+\sqrt{2}\right) \Gamma^6 \left(\frac{1}{4}\right)}{128 \pi ^{9/2}}
 = \sum _{n=0}^{\infty} \frac{(-1)^n (2 n+1)^2}{e^{2 (2 n+1) \pi }-1}. $$
\end{example}

 Theorem \ref{202211707767171777755AM1AAAA} may be further applied 
 so as to obtain the following evaluation for the sum given by 
 setting $s = 3$ and replacing $\text{csch}$ with $\text{csch}^{2}$ 
 in Zucker's sum in \eqref{Zuckermain}. 

\begin{theorem}\label{2022110693177777PM1A777}
 The identity 
 $$ \frac{8 \left(k^2-1\right) K^4 \left(3 E+\left(k^2-3\right) K\right)}{\pi ^4 (K'
 (E-K)+E' K)}
 = \sum _{n=0}^{\infty} (-1)^n (2 n + 1)^3 \text{\emph{csch}}^2\left( \frac{2n+1}{2} \frac{K'}{K} \pi \right) $$
 holds if the above series converges. 
\end{theorem}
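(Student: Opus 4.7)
The plan is to derive Theorem \ref{2022110693177777PM1A777} by differentiating Theorem \ref{202211707767171777755AM1AAAA} term-by-term with respect to the elliptic modulus $k$, in direct analogy with the way identity \eqref{20221aaaaaaa10AAAA55PLUSSTAR33PM1A} was obtained from \eqref{Bagisformula}. The key elementary observation is the identity $\text{csch}^{2}(x/2) = 4 e^{x}/(e^{x}-1)^{2}$, which lets me rewrite
$$ \frac{d}{dk}\left[\frac{1}{e^{(2n+1)\pi K'/K}-1}\right] = -\frac{(2n+1)\pi}{4}\frac{d}{dk}\!\left(\frac{K'}{K}\right)\text{csch}^{2}\!\left(\frac{2n+1}{2}\frac{K'}{K}\pi\right). $$
Combined with the factor $(2n+1)^{2}$ already present on the right-hand side of Theorem \ref{202211707767171777755AM1AAAA}, this produces exactly the desired $(2n+1)^{3}\,\text{csch}^{2}(\cdots)$ summand after summing against $(-1)^{n}$. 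Term-by-term differentiation is justified because the differentiated summand still decays exponentially in $n$.

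Next I would compute $(d/dk)(K'/K)$ in closed form. From the standard formula $dK/dk = (E - k'^{2} K)/(k k'^{2})$ together with the chain rule and $dk'/dk = -k/k'$, one obtains $dK'/dk = (k^{2} K' - E')/(k k'^{2})$. Substituting into the quotient rule and using $k^{2}+k'^{2}=1$ gives
$$ \frac{d}{dk}\!\left(\frac{K'}{K}\right) = \frac{K(k^{2} K' - E') - K'(E - k'^{2} K)}{k k'^{2} K^{2}} = \frac{K K' - (K E' + K' E)}{k k'^{2} K^{2}}, $$
and Legendre's relation $K E' + K' E - K K' = \pi/2$ collapses the numerator to $-\pi/2$. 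Hence $(d/dk)(K'/K) = -\pi/(2 k k'^{2} K^{2})$.

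Differentiating the left-hand side of Theorem \ref{202211707767171777755AM1AAAA} using the same formula for $dK/dk$ yields, after the contributions of $2k K^{3}$ and $3(k^{2}-1)K^{2}(dK/dk)$ combine (the factor $k'^{2}$ cancels), the compact expression $-2 K^{2}(3 E + (k^{2}-3) K)/(\pi^{3} k)$. Equating the two derivatives and isolating the series gives
$$ \sum_{n=0}^{\infty}(-1)^{n}(2n+1)^{3}\,\text{csch}^{2}\!\left(\frac{2n+1}{2}\frac{K'}{K}\pi\right) = \frac{16(k^{2}-1)K^{4}\bigl(3 E + (k^{2}-3)K\bigr)}{\pi^{5}}, $$
and replacing one factor of $\pi$ in the denominator using the reverse form $\pi/2 = K'(E-K) + E' K$ of Legendre's relation converts this to the stated form.

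The main obstacle is bookkeeping rather than conceptual: the algebra combining $dK/dk$, $dK'/dk$, and Legendre's relation is delicate, and one must carefully track which quantities are functions of $k$ and which of $k'$ so that the cancellations producing $-\pi/2$ actually occur. Once the identity $(d/dk)(K'/K) = -\pi/(2 k k'^{2} K^{2})$ is obtained, all remaining steps are routine algebraic simplification.
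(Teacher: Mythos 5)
Your proposal is correct and follows exactly the paper's route: the paper's proof consists of the single sentence that the result ``follows in a direct way by applying term-by-term differentiation with respect to the elliptic modulus in Theorem \ref{202211707767171777755AM1AAAA},'' and your argument simply fills in the details of that differentiation (the $\mathrm{csch}^{2}$ rewriting, the computation $(d/dk)(K'/K)=-\pi/(2kk'^{2}K^{2})$ via Legendre's relation, and the final algebraic simplification), all of which check out.
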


\begin{proof}
 This follows in a direct way by applying term-by-term differentiation with respect
 to the elliptic modulus in Theorem \ref{202211707767171777755AM1AAAA}. 
\end{proof}

\begin{example}
 Using the elliptic alpha function 
\begin{equation}\label{ellipticalpha}
 \alpha(r) = \frac{\pi}{ 4 K^{2}(k_{r}) } + \sqrt{r} - \frac{E(k_{r}) \sqrt{r} }{ K(k_{r}) } 
\end{equation}
 together with the elliptic function identity 
\begin{equation}\label{202211069999999932PM1A9999999}
 E' = \frac{\pi}{4 K} + \alpha(r) K 
\end{equation}
 and the known formula $\alpha(1) = \frac{1}{2}$, 
 we may obtain symbolic forms for $E(k_{1})$ and $E'(k_{1})$.
 So, in Theorem \ref{2022110693177777PM1A777}, we set 
 $k = k_{1}$ as the value in \eqref{20221102253PM4AAAAApp}, 
 so as to obtain that 
 $$ \frac{\Gamma^{10} \left(\frac{1}{4}\right)}{128 
 \pi ^{15/2}}-\frac{3 \Gamma^6 \left(\frac{1}{4}\right)}{32 \pi ^{11/2}} 
 = \sum _{n=0}^{\infty} (-1)^n (2 n + 1)^3 
 \text{csch}^2\left(\frac{2 n + 1}{2} \pi \right). $$
\end{example}

\begin{example}
 Setting $k = k_{4}$ as the value in \eqref{202219190896710758PM1A}, 
 from the known valuation $ \alpha(4) = 2 (\sqrt{2} - 1)^2$, 
 this, together with the elliptic alpha function identities in \eqref{ellipticalpha} and \eqref{202211069999999932PM1A9999999}, 
 allows us to obtain, 
 via Theorem \ref{2022110693177777PM1A777}, that 
$$ \frac{\left(2+\sqrt{2}\right) \Gamma^{10} \left(\frac{1}{4}\right)}{1024 
 \pi ^{15/2}}-\frac{3 \left(1+\sqrt{2}\right) \Gamma^6 \left(\frac{1}{4}\right)}{128 \pi ^{11/2}} 
 = \sum _{n=0}^{\infty} (-1)^n (2 n + 1)^3 \text{csch}^2((2 n + 1) \pi). $$ 
\end{example}

 Relative to our proof of Theorem \ref{2022110693177777PM1A777}, 
 we may similarly evaluate 
\begin{equation}\label{202211061050PM1A}
 \sum _{n=0}^{\infty} (-1)^n (2 n + 1)^s \text{{csch}}^2\left( \frac{2n+1}{2} \frac{K'}{K} \pi \right) 
\end{equation}
 for odd natural numbers $s \in \mathbb{N}_{\geq 5}$. 
 It would be desirable to explicitly evaluate \eqref{202211061050PM1A}
 for odd $s \in \mathbb{N}$. This is nontrivial, and may require combinatorial 
 formulas for higher derivatives for $\text{dc}(u, k)$ resulting from repeated applications 
 of the differential equation system given by \eqref{DE1}--\eqref{DE3}. 
 We leave this, together with some other problems considered in Section 
 \ref{sectionconclusion}, for a separate project. 

\subsection{A new sign function}
 Our identities for alternating sums involving factors of the form 
\begin{equation}\label{20221777777771AAAAA06PLUS901PM1A}
 \text{csch}^{2}\left( \frac{2n+1}{2} \frac{K'}{K} \pi \right), 
\end{equation}
 as in Theorem \ref{notfirst} below, 
 are of interest in part because these identities cannot be derived from previously known
 results on sums involving expressions such as 
$$ \text{csch}\left( \frac{2n+1}{2} \frac{K'}{K} \pi \right), $$
 such as the relation whereby $$ \frac{k K}{\pi} 
 = \sum_{n=0}^{\infty} (-1)^{n} \text{csch}\left( \frac{2n+1}{2} \frac{K'}{K} \pi \right) $$
 given as (5.3.4.1) in \cite{PrudnikovBrychkovMarichev1986} 
 and employed in \cite{Yakubovich2018} via term-by-term differentiation with respect to the elliptic modulus. 
 To obtain new sums involving \eqref{20221777777771AAAAA06PLUS901PM1A}, 
 we are to interchange the limiting operations given by the application of $\frac{d}{dk}$ and the 
 application of $\sum_{n=0}^{\infty} \cdot$ 
 in the new result highlighted as Theorem \ref{halfRamanujan} below. 
 To the best of our knowledge, hyperbolic sums 
 involving sign functions such as $ n \mapsto (-1)^{\left\lfloor \frac{n}{2}\right\rfloor } $ 
 have not previously appeared in the relevant literature on hyperbolic sums, 
 which further motivates the research interest in results as in Theorems 
 \ref{halfRamanujan} and \ref{notfirst} below. 

\begin{theorem}\label{halfRamanujan}
 The identity 
 $$ \frac{\left(\sqrt{1-k}+\sqrt{1+k}\right) K }{2 \pi }-\frac{1}{2} 
 = \sum_{n = 0}^{\infty} \frac{ (-1)^{\left\lfloor \frac{n}{2}\right\rfloor }}{e^{ (2 n+1) \frac{K'}{K} \pi }-1} $$
 holds if the above series converges. 
\end{theorem}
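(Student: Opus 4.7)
The plan is to mimic the derivation of the Bagis formula \eqref{Bagisformula}, but with a different choice of $w$ in the Fourier expansion \eqref{dcFouriery}. The key observation is that the sign function $n\mapsto (-1)^{\lfloor n/2\rfloor}$ produces the periodic pattern $+,+,-,-,+,+,-,-,\ldots$, and we want to pick $w$ so that the product $(-1)^{n}\cos((2n+1)\pi w)$ reproduces this pattern up to a constant factor. Setting $w=\tfrac14$ does the job: direct inspection of $\cos((2n+1)\pi/4)$ for $n=0,1,2,3,\ldots$ gives the pattern $+,-,-,+,+,-,-,+,\ldots$, each with magnitude $\tfrac{\sqrt2}{2}$, and multiplying by $(-1)^n$ converts this into $\tfrac{\sqrt{2}}{2}(-1)^{\lfloor n/2\rfloor}$.

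With this choice, I would substitute $w=\tfrac14$ into \eqref{dcFouriery}, yielding
\begin{equation*}
 \mathrm{dc}\!\left(\tfrac{K}{2},k\right)
 \;=\; \frac{\pi\sqrt{2}}{2K} \;+\; \frac{\pi\sqrt{2}}{K}\sum_{n=0}^{\infty}\frac{(-1)^{\lfloor n/2\rfloor}}{e^{(2n+1)\pi K'/K}-1},
\end{equation*}
after combining the constant factor $\tfrac{1}{\cos(\pi/4)}=\sqrt{2}$ with the cosine evaluations. Solving for the series gives
\begin{equation*}
 \sum_{n=0}^{\infty}\frac{(-1)^{\lfloor n/2\rfloor}}{e^{(2n+1)\pi K'/K}-1}
 \;=\; \frac{K\,\mathrm{dc}(K/2,k)}{\pi\sqrt{2}} \;-\; \frac{1}{2}.
\end{equation*}

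The remaining ingredient is the half-period special value $\mathrm{dc}(K/2,k)$. Using the standard evaluations $\mathrm{cn}(K/2,k)=\sqrt{k'/(1+k')}$ and $\mathrm{dn}(K/2,k)=\sqrt{k'}$ and the definition \eqref{dcdefinition}, one obtains $\mathrm{dc}(K/2,k)=\sqrt{1+k'}$. Rewriting this via the elementary identity $(\sqrt{1-k}+\sqrt{1+k})^{2}=2(1+k')$, which follows since $2\sqrt{(1-k)(1+k)}=2k'$, yields $\mathrm{dc}(K/2,k)=(\sqrt{1-k}+\sqrt{1+k})/\sqrt{2}$. Substituting this into the displayed formula for the series immediately produces the claimed right-hand side $\tfrac{(\sqrt{1-k}+\sqrt{1+k})K}{2\pi}-\tfrac12$.

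I do not anticipate a serious technical obstacle: the main substantive step is spotting that $w=\tfrac14$ is the correct specialization and confirming that this is the only $w$ in $(0,\tfrac12)$ for which the cosines on the right of \eqref{dcFouriery} collapse to a constant multiple of the target sign function. Once that is seen, the remainder reduces to a textbook evaluation of $\mathrm{dc}$ at $K/2$ and a short algebraic rearrangement; convergence of the series under the hypothesis is inherited from that of \eqref{dcFouriery} at $w=\tfrac14$, where the generating expansion remains valid since $\cos(\pi/4)\neq 0$.
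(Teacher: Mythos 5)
Your proof is correct and follows essentially the same route as the paper's: substituting $w=\tfrac14$ into \eqref{dcFouriery}, recognizing that $(-1)^n\cos\bigl(\tfrac{2n+1}{4}\pi\bigr)=\tfrac{\sqrt2}{2}(-1)^{\lfloor n/2\rfloor}$, and evaluating $\text{dc}(K/2,k)$ via the half-$K$ formulas (your forms of $\text{cn}(K/2,k)$ and $\text{dn}(K/2,k)$ are equivalent to \eqref{halfcn} and \eqref{halfdn}). The only difference is cosmetic: the paper writes the half-$K$ values with $\sqrt[4]{1-k^2}$ whereas you use $k'$, and both simplify to $\text{dc}(K/2,k)=(\sqrt{1-k}+\sqrt{1+k})/\sqrt2$.
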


\begin{proof}
 From the identity in \eqref{dcFouriery}, we find that the quotient 
 $$ \frac{ \text{dn}(2 K w, k) }{ \text{cn}(2 K w, k) } $$
 admits the same expansion as in \eqref{dcFouriery}. 
 So, setting $w = \frac{1}{4}$ and applying the half-$K$ formulas 
\begin{equation}\label{halfcn}
 \text{cn}\left( \frac{1}{2} K, k \right) 
 = \frac{\sqrt{2} \sqrt[4]{1-k^2} }{ \sqrt{1 + k} + \sqrt{1 - k} } 
\end{equation}
 and 
\begin{equation}\label{halfdn}
 \text{dn}\left( \frac{1}{2} K, k \right) = \sqrt[4]{1-k^2}, 
\end{equation}
 we obtain the expansion $$ \frac{\sqrt{1-k}+\sqrt{k+1}}{\sqrt{2}}
 = \frac{\pi}{2 K} \left(\sqrt{2} + 
 4 \sum _{n = 0}^{\infty} \frac{(-1)^n \cos \left(\frac{2 n + 1}{4} \pi \right)}{ e^{ (2n+1) \frac{K'}{K} \pi } - 1 } \right), $$ 
 which is equivalent to the desired result. 
\end{proof}

\begin{example}\label{exampledcquarter1}
 For the elliptic singular value corresponding to the $r=1$ case in Theorem \ref{halfRamanujan}, 
 we obtain that 
 $$ \frac{\sqrt{2+\sqrt{2}} \Gamma^2 \left(\frac{1}{4}\right)}{8 \pi ^{3/2}}-\frac{1}{2} 
 = \sum _{n=0}^{\infty } \frac{(-1)^{\left\lfloor \frac{n}{2}\right\rfloor }}{e^{(2 n+1) \pi }-1}. $$ 
 This is equivalent to the $q$-digamma evaluation shown below: 
 $$ 4 \pi +\sqrt{\frac{2+\sqrt{2}}{\pi }} \Gamma^2 \left(\frac{1}{4}\right) 
 = -\psi _{e^{8 \pi }} \left(\frac{1}{8}\right)-\psi _{e^{8 \pi }} \left(\frac{3}{8}\right)+\psi _{e^{8 \pi }} \left(\frac{5}{8}\right)+\psi _{e^{8 \pi }} \left(\frac{7}{8}\right). $$
\end{example}

\begin{example}\label{exampledcquarter1p}
 Setting $k = k_{4}$ in Theorem \ref{halfRamanujan}, 
 we may obtain that 
 $$ \frac{\sqrt{3+2 \sqrt{2}+2 \sqrt{4+3 \sqrt{2}}} \Gamma^2 \left(\frac{1}{4}\right)}{16 \pi ^{3/2}}-\frac{1}{2} 
 = \sum _{n=0}^{\infty} 
 \frac{(-1)^{\left\lfloor \frac{n}{2}\right\rfloor }}{e^{2 (2 n+1) \pi }-1}. $$ 
 This is equivalent to the $q$-digamma evaluation shown below: 
\begin{align*}
 & 8 \pi +\sqrt{\frac{3+2 \sqrt{2}+2 \sqrt{4+3 \sqrt{2}}}{\pi }} \Gamma^2 \left(\frac{1}{4}\right) = \\ 
 & -\psi _{e^{16 \pi }} \left(\frac{1}{8}\right) - 
 \psi _{e^{16 \pi }} \left(\frac{3}{8}\right)+\psi _{e^{16 \pi }} \left(\frac{5}{8}\right)+\psi _{e^{16 \pi 
 }} \left(\frac{7}{8}\right). 
\end{align*}
\end{example}

\begin{theorem}\label{notfirst}
 The identity 
\begin{align*}
 & \frac{2 k \left(k^2-1\right) K^2
 \left(\frac{\left(\sqrt{1-k}+\sqrt{1+k}\right) \left(E + 
 \left(k^2-1\right) K \right)}{k 
 \left(k^2-1\right)}-\frac{1}{2} \left(\frac{1}{\sqrt{1+k}}-\frac{1}{\sqrt{1-k}}\right) K\right)}{\pi ^2 (E' 
 K + (E - K) K')} \\ 
 & = \sum _{n=0}^{\infty} 
 (-1)^{\left\lfloor \frac{n}{2}\right\rfloor } (2 n+1) \text{\emph{csch}}^2
 \left( \frac{2n+1}{2} \frac{K'}{K} \pi \right) 
\end{align*}
 holds for suitably bounded $k$. 
\end{theorem}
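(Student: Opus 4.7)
The plan is to derive Theorem~\ref{notfirst} from Theorem~\ref{halfRamanujan} by applying term-by-term differentiation with respect to $k$, precisely as Theorem~\ref{2022110693177777PM1A777} was obtained by differentiating Theorem~\ref{202211707767171777755AM1AAAA}, and as \eqref{20221aaaaaaa10AAAA55PLUSSTAR33PM1A} was obtained from \eqref{Bagisformula}. So, starting from the identity
$$ \frac{\left(\sqrt{1-k}+\sqrt{1+k}\right) K}{2\pi} - \frac{1}{2} = \sum_{n=0}^{\infty} \frac{(-1)^{\left\lfloor n/2 \right\rfloor}}{e^{(2n+1)\frac{K'}{K}\pi} - 1}, $$
I would differentiate both sides with respect to the elliptic modulus and then solve for the target sum.

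For the right-hand side, I would use the chain rule together with the elementary identity $\frac{e^x}{(e^x-1)^2} = \frac{1}{4}\operatorname{csch}^2(x/2)$, applied with $x = (2n+1)\frac{K'}{K}\pi$. The key ingredient here is the derivative of the modular quotient, which follows from the standard formulas $\frac{dK}{dk} = \frac{E-(1-k^2)K}{k(1-k^2)}$ and $\frac{dK'}{dk} = \frac{k^2 K' - E'}{k(1-k^2)}$, combined with the Legendre relation $EK' + E'K - KK' = \frac{\pi}{2}$. This collapses $\frac{d}{dk}(K'/K)$ to the clean expression $-\frac{\pi}{2k(1-k^2)K^2}$, so that differentiating the $n$th term produces exactly $\frac{(2n+1)\pi^2}{8k(1-k^2)K^2}(-1)^{\lfloor n/2\rfloor} \operatorname{csch}^2\!\left(\frac{2n+1}{2}\frac{K'}{K}\pi\right)$, and the common prefactor $\frac{\pi^2}{8k(1-k^2)K^2}$ can be pulled outside the sum.

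For the left-hand side, I would apply the product rule to $\bigl(\sqrt{1-k}+\sqrt{1+k}\bigr)K$, yielding $\frac{1}{2}\bigl(\tfrac{1}{\sqrt{1+k}}-\tfrac{1}{\sqrt{1-k}}\bigr)K + \bigl(\sqrt{1-k}+\sqrt{1+k}\bigr)\frac{E - (1-k^2)K}{k(1-k^2)}$, rewrite $1-k^2 = -(k^2-1)$ so that the second summand becomes $\bigl(\sqrt{1-k}+\sqrt{1+k}\bigr)\frac{E + (k^2-1)K}{-k(k^2-1)}$ (matching the paper's typesetting), and finally equate the two derivatives. Multiplying through by $\frac{8k(1-k^2)K^2}{\pi^2} = -\frac{8k(k^2-1)K^2}{\pi^2}$ and using the Legendre relation in the form $E'K + (E-K)K' = \frac{\pi}{2}$ to replace the isolated $\frac{1}{\pi}$ factor, the prefactor reorganizes into exactly the form $\frac{2k(k^2-1)K^2}{\pi^2(E'K+(E-K)K')}$ shown in the statement, with the bracketed expression matching term-for-term.

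The only non-routine point is justifying the interchange of $\frac{d}{dk}$ and $\sum_{n=0}^{\infty}$; here, for $k$ in a compact subinterval of $(0,1)$ the nome $q = e^{-\pi K'/K}$ is bounded strictly below $1$, which forces exponential decay of the differentiated summands uniform in $k$, so standard dominated-convergence or Weierstrass-$M$ arguments (the same ones tacitly used in Theorem~\ref{2022110693177777PM1A777}) apply. The rest is algebraic bookkeeping: carefully matching the signs produced by $1-k^2 = -(k^2-1)$ and verifying that the Legendre relation correctly converts the $\pi^3$ arising from the differentiation into the $\pi^2\cdot(E'K+(E-K)K')$ displayed in the theorem.
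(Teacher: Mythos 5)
Your proposal is correct and follows exactly the route of the paper's own (one-line) proof: differentiate the identity of Theorem~\ref{halfRamanujan} term by term with respect to the elliptic modulus, interchange $\frac{d}{dk}$ with the sum, and simplify — with the Legendre relation $E'K+(E-K)K'=\frac{\pi}{2}$ both collapsing $\frac{d}{dk}(K'/K)$ to $-\frac{\pi}{2k(1-k^2)K^2}$ and accounting for the $\pi^2(E'K+(E-K)K')$ denominator in the stated closed form. The algebraic details you supply (including the identity $e^x/(e^x-1)^2=\frac{1}{4}\operatorname{csch}^2(x/2)$ and the uniform-convergence justification) all check out.
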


\begin{proof}
 This follows in a direct way by differentiating both sides of the identity in Theorem \ref{halfRamanujan}
 with respect to the elliptic modulus, and then reversing the order of differentiation and infinite summation, 
 and then applying much simplification. 
\end{proof}
 
 Our series as in Example \ref{20221102339AM1A} are 
 motivated by series evaluations of a similar appearance recorded in \cite{Berndt1978Reine}. 
 For example, the series evaluation 
 $$ \sum_{n=1}^{\infty} (-1)^{n+1} n \text{csch}(\pi n) = \frac{1}{4 \pi} $$
 recorded \cite{Berndt1978Reine} was, as noted in \cite{Berndt1978Reine}, 
 previously proved by many difference authors in 
 \cite{Cauchy1889,Krishnamaghari1920,Nanjundiah1951,NaylorHorsley1931,Sandham1954,Sayer1976,Zucker1979}. 
 To obtain new closed forms from Theorem \ref{notfirst}, we are to make use 
 of the elliptic alpha function \cite[\S5]{BorweinBorwein1987Pi}

\begin{example}\label{202211053AAAAAAAAAAAAAAA2252PLUYS2PMSTAR1A}
 Using the elliptic singular value $k_{1}$, a special case of Theorem \ref{notfirst} gives us that 
 $$ \frac{\sqrt{2+\sqrt{2}} \Gamma^2 \left(\frac{1}{4}\right)}{4 
 \pi ^{5/2}}-\frac{\sqrt{2-\sqrt{2}} \Gamma^6 \left(\frac{1}{4}\right)}{64 \pi ^{9/2}} 
 = \sum _{n = 
 0}^{\infty} (-1)^{\left\lfloor \frac{n}{2}\right\rfloor } (2 n+1) \text{csch}^2\left(\frac{2 n+1}{2} \pi \right). $$ 
\end{example}

\begin{example}\label{20221102339AM1A}
 Using the elliptic singular value $k_{4}$, 
 a special case of Theorem \ref{notfirst} gives us that 
\begin{align*}
 & \frac{\left(\sqrt{1+\sqrt{2}}+\sqrt{2+\sqrt{2}}\right) \Gamma^2 \left(\frac{1}{4}\right)}{16 \pi ^{5/2}} - 
 \frac{\sqrt{4+\sqrt{2} + 2^{7/4}} \Gamma^6 \left(\frac{1}{4}\right)}{256 \pi ^{9/2}} = \\ 
 & \sum _{n=0}^{\infty} (-1)^{\left\lfloor \frac{n}{2}\right\rfloor } (2 n+1) \text{csch}^2((2 n+1) \pi). 
\end{align*}
\end{example}

 As in \cite{BerndtVenkatachaliengar2001}, we record that Nanjundiah's formula 
\begin{equation*}
 \sum_{n=1}^{\infty} \text{csch}^{2}(n \pi) = \frac{1}{6} - \frac{1}{2\pi} 
\end{equation*}
 has, subsequent to Nanjundiah 1951 proof \cite{Nanjundiah1951}, 
 been proved by many different authors, including 
 Berndt \cite{Berndt1977}, Ling \cite{Ling1975}, 
 Kiyek and Schmidt \cite{KiyekSchmidt1967}, 
 Muckenhoupt \cite{Muckenhoupt196465}, and Shafer \cite{Shafer1963}. 
 Nanjundiah's formula, together with our new sums involving $\text{csch}^{2}$ 
 given in Examples \ref{202211053AAAAAAAAAAAAAAA2252PLUYS2PMSTAR1A}
 and \ref{20221102339AM1A}, inspire us to further apply our method 
 in the evaluation of sums involving $\text{csch}^{2}$. 

\subsection{Higher powers of $2n+1$}\label{subsectionpolyhigh}
 We are to again make use of the equality of \eqref{donotrepeat1}
 and \eqref{donotrepeat2}, 
 to prove the following companion to Theorem \ref{202211707767171777755AM1AAAA}. 

\begin{theorem}\label{29999999990229110691900979PM1A}
 The equality 
 $$ \frac{3}{2}+\frac{4 \left( k^2 \left(3 + k' \right) -3 \left(1 + 
 k' \right) \right) K^3}{\left(\sqrt{1-k}+\sqrt{1+k}\right) \pi ^3}
 = \sum _{n=0}^{\infty} \frac{ (-1)^{\left\lfloor \frac{n}{2}\right\rfloor } (2 n+1)^2 }{ e^{ (2 n+1) \frac{ K'}{K} \pi } -1} $$
 holds if the above series converges. 
\end{theorem}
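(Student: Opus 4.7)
The plan is to mirror the proof of Theorem \ref{202211707767171777755AM1AAAA}, but to specialize at $w = \frac{1}{4}$ rather than $w = 0$, in the same spirit that Theorem \ref{halfRamanujan} extends the $w = 0$ identity \eqref{Bagisformula}. Concretely, I would start from the equality of \eqref{donotrepeat1} and \eqref{donotrepeat2} already established in the proof of Theorem \ref{202211707767171777755AM1AAAA}, enforce the substitution $u \mapsto 2 w K$, and then set $w = \frac{1}{4}$, so that $u = \frac{1}{2}K$ and $\frac{\pi u}{2K} = \frac{\pi}{4}$.

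On the right-hand side (the expression \eqref{donotrepeat2}), the evaluations $\cos(\pi/2) = 0$ and $\sec^{3}(\pi/4) = 2\sqrt{2}$ reduce the non-series term to a clean constant multiple of $\pi^{3}/K^{3}$. Inside the series, a short case check gives $(-1)^{n}\cos\!\left(\frac{(2n+1)\pi}{4}\right) = \frac{\sqrt{2}}{2}(-1)^{\lfloor n/2 \rfloor}$, which is the same mechanism that produces the sign function in Theorem \ref{halfRamanujan}. Rewriting $\frac{q^{2n+1}}{1 - q^{2n+1}} = \frac{1}{e^{(2n+1)\pi K'/K} - 1}$ then recovers the right-hand side of the theorem up to an algebraic constant.

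On the left-hand side, I would apply the half-$K$ formulas \eqref{halfcn} and \eqref{halfdn}, together with $\text{sn}^{2}(K/2, k) = 1 - \text{cn}^{2}(K/2, k)$, to obtain $\text{cn}^{2}(K/2) = \frac{k'}{1+k'}$, $\text{dn}^{2}(K/2) = k'$, and $\text{sn}^{2}(K/2) = \frac{1}{1+k'}$. The factor $\text{dn}^{2}(u, k) - k^{2}\,\text{cn}^{2}(u, k)$ appearing in \eqref{donotrepeat1} collapses to $k'^{2}$ after using $1 + k' - k^{2} = k'(1 + k')$, and further simplification reduces the whole of \eqref{donotrepeat1} at $u = K/2$ to $k'(2 + k')\sqrt{1 + k'}$.

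Combining the two evaluations and solving for the series should yield a closed form of the shape $\frac{3}{2} - \frac{4 K^{3}\,k'(2+k')\sqrt{1+k'}}{\sqrt{2}\,\pi^{3}}$. The remaining obstacle is cosmetic but the trickiest step to check: I would multiply the numerator and denominator of the second summand by $\sqrt{1+k'}$ in order to introduce $\sqrt{2(1+k')} = \sqrt{1-k}+\sqrt{1+k}$ in the denominator, and then verify, using $k^{2} = 1 - k'^{2}$, the polynomial identity $-k'(2+k')(1+k') = k^{2}(3+k') - 3(1+k')$, which matches the numerator in the claimed form. Both sides of this identity expand to $-2k' - 3k'^{2} - k'^{3}$, at which point the stated equality follows.
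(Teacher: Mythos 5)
Your proposal is correct and follows essentially the same route as the paper: specializing the equality of \eqref{donotrepeat1} and \eqref{donotrepeat2} at $u = 2wK$ with $w = \tfrac{1}{4}$, applying the half-$K$ formulas to reduce the left-hand side to $k'(2+k')\sqrt{1+k'}$, and extracting the sign function $(-1)^{\lfloor n/2\rfloor}$ from $(-1)^n\cos\bigl(\tfrac{(2n+1)\pi}{4}\bigr)$. The final algebraic identification with the stated closed form, via $\sqrt{1-k}+\sqrt{1+k}=\sqrt{2(1+k')}$ and $k^2 = 1-k'^2$, checks out.
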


\begin{proof}
 Starting with the equality of \eqref{donotrepeat1} 
 and \eqref{donotrepeat2}, 
 we again set $u \mapsto 2 w K$. 
 Using the half-$K$ formulas in \eqref{halfcn} and \eqref{halfdn}, together with the half-$K$ formula 
 $$ \text{sn}\left( \frac{K}{2}, k \right) = \frac{\sqrt{2}}{\sqrt{1+k} + \sqrt{1-k}}, $$
 we may obtain that 
\begin{align*}
 & \frac{\sqrt{2} \left( k' + 2\right) \left( k' + 1 -k^2 \right)}{\sqrt{1-k}+\sqrt{k+1}} = 
 \frac{3 \pi ^3}{4 \sqrt{2} K^3} - 
 \frac{\pi ^3 }{2 \sqrt{2} K^3} 
 \sum _{n=0}^{\infty} \frac{(2 n+1)^2 (-1)^{\left\lfloor \frac{n}{2}\right\rfloor }}{ e^{ (2 n+1) 
 \frac{K'}{K} \pi } - 1}, 
\end{align*}
 and this is equivalent to the desired result. 
\end{proof}

\begin{example}
 Theorem \ref{29999999990229110691900979PM1A}, together with the elliptic integral singular 
 value $k_{1}$, give us that 
 $$ \frac{3}{2}-\frac{\sqrt{26+17 \sqrt{2}} \Gamma^6 \left(\frac{1}{4}\right)}{64 \pi ^{9/2}}
 = \sum _{n=0}^{\infty} \frac{(2 n+1)^2 (-1)^{\left\lfloor \frac{n}{2}\right\rfloor }}{e^{(2 n+1) \pi }-1}. $$
\end{example}

\begin{example}
 Theorem \ref{29999999990229110691900979PM1A}, 
 together with the elliptic integral singular value $k_{4}$, give us that 
 $$ \frac{3}{2}-\frac{\sqrt{54+37 \sqrt{2}+4 \sqrt{352+249 \sqrt{2}}} \Gamma^6 \left(\frac{1}{4}\right)}{128 \pi ^{9/2}}
 = \sum _{n=0}^{\infty} \frac{(2 n+1)^2 (-1)^{\left\lfloor \frac{n}{2}\right\rfloor }}{e^{2 (2 n+1) \pi }-1}. $$ 
\end{example}

 As below, Theorem \ref{29999999990229110691900979PM1A} 
 may be used to evaluate the $s = 3$ case for the sum 
 obtained from Zucker's sum in \eqref{Zuckermain}
 by replacing $(-1)^{n}$ with $ (-1)^{\left\lfloor \frac{n}{2}\right\rfloor }$
 and by replacing $\text{csch}$ with $\text{csch}^2$. 

\begin{theorem}
 The series 
 $$ \sum _{n=0}^{\infty} 
 (-1)^{\left\lfloor \frac{n}{2}\right\rfloor } (2 n + 1)^3 \text{\emph{csch}}^2 
 \left( \frac{2n+1}{2} \frac{K'}{K} \pi \right) $$ 
 may be evaluated as 
\begin{align*}
 & \Big( 4 k' 
 K^4 \big(6 \big(k \big(k \big(\sqrt{1-k}+\sqrt{1+k}\big)-3 \sqrt{1-k}+3 \sqrt{1+k}\big)-6 \big(\sqrt{1-k}+\sqrt{1+k}\big)\big)
 E + \\
 & \big(36 \big(\sqrt{1-k}+\sqrt{1+k}\big)+k \big(18 \big(\sqrt{1-k}-\sqrt{1+k}\big)+k \big(k \big(k \big(\sqrt{1-k}+\sqrt{1+k}\big)-8 \sqrt{1-k} + \\ 
 & 8
 \sqrt{1+k}\big)-23 \big(\sqrt{1-k}+\sqrt{1+k}\big)\big)\big)\big) K\big) \Big) \Big/ \left( 
 \pi ^4 \big( k' + 1\big) (K' (E-K)+E' K) \right), 
\end{align*}
 if the above series is convergent. 
\end{theorem}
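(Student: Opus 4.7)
The plan is to derive the stated evaluation by applying term-by-term differentiation with respect to the elliptic modulus $k$ to both sides of Theorem \ref{29999999990229110691900979PM1A}, paralleling how Theorem \ref{2022110693177777PM1A777} was obtained from Theorem \ref{202211707767171777755AM1AAAA} and how Theorem \ref{notfirst} was obtained from Theorem \ref{halfRamanujan}. Writing $x_{n} = (2n+1)\pi K'/K$, the chain rule applied to $1/(e^{x_{n}}-1)$ together with the elementary identity $e^{x}/(e^{x}-1)^{2} = \frac{1}{4}\text{csch}^{2}(x/2)$ turns each summand $(-1)^{\lfloor n/2\rfloor}(2n+1)^{2}/(e^{x_{n}}-1)$ into an $n$-independent multiple of $(-1)^{\lfloor n/2\rfloor}(2n+1)^{3}\text{csch}^{2}\!\left( \frac{2n+1}{2}\frac{K'}{K}\pi \right)$, with the common prefactor depending only on $\frac{d}{dk}(K'/K)$. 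The interchange of $\frac{d}{dk}$ and $\sum_{n=0}^{\infty}$ is justified exactly as in the proof of Theorem \ref{notfirst}.

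Using the standard derivatives $dK/dk = (E - k'^{2}K)/(kk'^{2})$ and $dK'/dk = -(E' - k^{2}K')/(kk'^{2})$, the common prefactor expands into the combination $K'(E-K) + E'K$ that appears in the denominator of the claimed identity; although Legendre's relation reduces this combination to $\pi/2$, keeping it unreduced matches the form of the target and is consistent with the style of Theorem \ref{notfirst}. On the other side of Theorem \ref{29999999990229110691900979PM1A}, differentiating the closed form $\frac{3}{2}+\frac{4(k^{2}(3+k')-3(1+k'))K^{3}}{(\sqrt{1-k}+\sqrt{1+k})\pi^{3}}$ via the product and chain rules pulls one factor of $dK/dk$ (and hence of $E$) out of $K^{3}$, accounting for the factor $K^{4}$ alongside the $E$-terms in the numerator of the target, while differentiating the radicals $\sqrt{1\pm k}$ and the $k'$ factor produces the combinations $k(\sqrt{1-k}+\sqrt{1+k})$, $\sqrt{1-k}-\sqrt{1+k}$, and $\sqrt{1-k}+\sqrt{1+k}$ that populate the numerator of the final statement, together with the overall denominator factor $k'+1 = 1 + \sqrt{1-k^{2}}$ arising from rationalizing $(\sqrt{1-k}+\sqrt{1+k})^{2}$.

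The main obstacle is the bulk of the algebraic simplification needed to bring the two derivatives into the precise form stated. In particular, one must juggle the nested surds $\sqrt{1\pm k}$ together with $k'=\sqrt{1-k^{2}}$ without inadvertently simplifying across a branch cut, and track how differentiation of the $1/(\sqrt{1-k}+\sqrt{1+k})$ factor interacts with the differentiation of $K^{3}$. I would discharge this by treating $s_{\pm} := \sqrt{1\pm k}$ as formal indeterminates subject to $s_{+}^{2}-s_{-}^{2}=2k$ and $s_{+}s_{-}=k'$, so that every intermediate expression is polynomial in $s_{+},s_{-},k,k',E,E',K,K'$, and then handing the final normal-form identification to a computer algebra system, as appears to be standard throughout this paper.
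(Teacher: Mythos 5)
Your proposal is correct and follows exactly the paper's own (one-line) proof: the paper likewise obtains this evaluation by applying term-by-term differentiation with respect to the elliptic modulus in Theorem \ref{29999999990229110691900979PM1A}. Your additional details --- the identity $e^{x}/(e^{x}-1)^{2}=\tfrac{1}{4}\operatorname{csch}^{2}(x/2)$, the role of Legendre's relation in producing the factor $K'(E-K)+E'K$, and the CAS-assisted surd bookkeeping --- merely flesh out the same argument.
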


\begin{proof}
 This may be proved by applying term-by-term differentiation, with respect to the elliptic modulus, 
 in Theorem \ref{29999999990229110691900979PM1A}. 
\end{proof}

\begin{example}
 Setting $k = k_1$, we obtain that 
 $$ \frac{\sqrt{218+151 \sqrt{2}} \Gamma^{10} \left(\frac{1}{4}\right) }{512 \pi ^{15/2}}-\frac{3 \sqrt{26 + 
 17 \sqrt{2}} \Gamma^6 \left(\frac{1}{4}\right)}{32 \pi ^{11/2}} = \sum _{n = 
 0}^{\infty} (-1)^{\left\lfloor \frac{n}{2}\right\rfloor } (2 n + 1)^3 \text{csch}^2\left(\frac{2n+1}{2} \pi \right). $$ 
\end{example}

\begin{example}
 Setting $k = k_4$, we obtain that 
 $$ \frac{\sqrt{402+287 \sqrt{2}+4 \sqrt{20296+14358 \sqrt{2}}} 
 \Gamma^{10} \left(\frac{1}{4}\right)}{2048 \pi ^{15/2}}-\frac{3 \sqrt{54+37 \sqrt{2}+4 \sqrt{352+249 \sqrt{2}}} 
 \Gamma^6 \left(\frac{1}{4}\right) }{128 \pi ^{11/2}} $$
 equals 
$$ \sum _{n=0}^{\infty} (-1)^{\left\lfloor \frac{n}{2}\right\rfloor } (2 n + 1)^3 \text{csch}^2((2 n+1) \pi). $$
\end{example}

 We may mimic the above proof to evaluate 
 $$ \sum _{n=0}^{\infty} 
 (-1)^{\left\lfloor \frac{n}{2}\right\rfloor } (2 n + 1)^s \text{{csch}}^2 
 \left( \frac{2n+1}{2} \frac{K'}{K} \pi \right) $$ 
 for odd $s$. 

\section{Applications of the Jacobi elliptic function nc}\label{Mainsectionnc}
 The results introduced in this section on symbolic forms for summations involving the $\text{sech}$
 function are inspired by the formula 
 $$ \frac{\Gamma^2 \left(\frac{1}{4}\right) }{4 \pi ^{3/2}} - \frac{1}{2} 
 = \sum _{n = 1}^{\infty} \text{sech}(\pi n) $$ 
 due to Ramanujan, which is highlighted as an especially amazing formula
 in the Wolfram Mathworld entry on 
 the hyperbolic secant \cite{Weissteinsech}. 
 A Ramanujan summation involving $\text{sech}$ more closely related to our work is Ramanujan's formula (cf.\ \cite{Berndt1977}) 
\begin{equation}\label{Ramanujansimilar}
 \sum_{n=0}^{\infty} (-1)^{n} (2n+1)^{4m-1} \text{sech}\left( \frac{2n+1}{2} \pi \right) = 0. 
\end{equation}
 Our applications of identities as in \eqref{Bagisformula}
 are also inspired by well known Ramanujan formulas such as the following \cite[\S14]{Berndt1989}:
 $$ \sum_{n=1}^{\infty} \frac{n}{e^{2 \pi n} - 1} = \frac{1}{24} - \frac{1}{8 \pi}. $$

 We begin with the following Fourier series expansion \cite[pp.\ 511--512]{WittakerWatson1958}: 
 $$ \text{nc}(u, k) = \frac{\pi}{2 K k'} \sec v - \frac{2 \pi}{K k'} 
 \sum_{n=0}^{\infty} \frac{(-1)^{n} q^{2n+1} }{1 + q^{2n+1}} \cos\left( (2n+1) v \right). $$ 
 Following the steps given in Section \ref{subsectiontechnique}, we obtain that 
 $$ \text{nc}(2 w K, k) 
 = \frac{\pi}{2 k' K } \sec(\pi w) - \frac{2\pi}{ k' K } 
 \sum_{n=0}^{\infty} \frac{(-1)^n}{ e^{ (2n+1) \frac{K'}{K} \pi } + 1 } \cos((2n+1) \pi w). $$ 
 Setting $w = 0$, 
 we can show that the left-hand side of the above equality equals $1$. 
 This gives us an equivalent formulation of the following result: 
\begin{equation}\label{20221102234AM1A}
 \frac{1}{4}-\frac{\sqrt{1-k^2} K }{2 \pi } 
 = \sum _{n=0}^{\infty} \frac{(-1)^n}{e^{ (2 n + 1) \frac{K'}{K} \pi }+1}. 
\end{equation}
 An equivalent result is given as Theorem 6 by Bagis in \cite{Bagis2019}. 
 From this previously known formula, by differentiating with respect to the elliptic modulus, 
 we may obtain the identity 
\begin{equation}\label{7202211076177777111PM1A}
 \frac{2 k' K^{2} 
 (K - E )}{\pi ^2 (K' (E - K ) + 
 E' K )} 
 = \sum _{n=0}^{\infty} 
 (-1)^n (2 n + 1) \text{{sech}}^2\left(\frac{2 n + 1}{2} 
 \frac{K'}{K} \pi \right). 
\end{equation}
 By direct analogy with the material in Section \ref{202219999961061110PM1A}, 
 we may generalize \eqref{7202211076177777111PM1A}
 so as to evaluate 
\begin{equation}\label{202211056111210101010101010PM1A}
 \sum _{n=0}^{\infty} 
 (-1)^n (2 n + 1)^{s} \text{{sech}}^2\left(\frac{2 n + 1}{2} 
 \frac{K'}{K} \pi \right) 
\end{equation}
 for odd $s$. For the sake of brevity, we omit a detailed 
 examination of this, 
 and we leave it do a separate project to 
 obtain an explicit combinatorial formula for 
 \eqref{202211056111210101010101010PM1A} based on repeated applications of the differential equations
 among \eqref{DE1}--\eqref{DE3}. 

 A direct application of the formula in \eqref{20221102234AM1A} 
 that we have proved is given by how this formula
 allows us to prove a stronger version of the formula 
\begin{equation}\label{makestronger}
 2 \sum_{n=0}^{\infty} \frac{(-1)^n}{ e^{(2n+1) \pi} + 1 } 
 + \sum_{n=0}^{\infty} \text{sech}\left( \frac{2n+1}{2} \pi \right) = \frac{1}{2} 
\end{equation}
 highlighted as Corollary 4.22 in \cite{Berndt1978Reine}. This was proved using Euler polynomials in \cite{Berndt1978Reine}. 

\begin{theorem}\label{stronger}
 The following stronger version of Berndt's identity in \eqref{makestronger} holds: 
 $$ \frac{\Gamma^2 \left(\frac{1}{4}\right)}{4 \sqrt{2} \pi ^{3/2}}
 = \sum _{n = 0}^{\infty} \text{\emph{sech}}\left(\frac{2 n+1}{2} \pi \right). $$ 
\end{theorem}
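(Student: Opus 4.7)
The plan is to obtain the claimed closed form by combining our newly proved identity \eqref{20221102234AM1A} with Berndt's Corollary 4.22 quoted in \eqref{makestronger}. The identity \eqref{makestronger} expresses the sech-sum of interest in terms of the alternating series $\sum_{n=0}^{\infty} (-1)^n / (e^{(2n+1)\pi}+1)$, so it suffices to evaluate the latter in closed form and then solve.

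First I would specialize \eqref{20221102234AM1A} at the elliptic singular value $k = k_{1} = \tfrac{\sqrt{2}}{2}$, noting that the defining relation \eqref{ellipticsingular} forces $K'/K = 1$ there, so the exponential factor $e^{(2n+1)(K'/K)\pi}$ collapses to $e^{(2n+1)\pi}$. Then $k' = 1/\sqrt{2}$ and the symbolic form \eqref{Kk1} gives $K(k_{1}) = \Gamma^{2}(1/4)/(4\sqrt{\pi})$, which together yield
\[
\sum_{n=0}^{\infty} \frac{(-1)^n}{e^{(2n+1)\pi}+1} = \frac{1}{4} - \frac{\Gamma^{2}(1/4)}{8\sqrt{2}\,\pi^{3/2}}.
\]

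Second, I would substitute this evaluation into \eqref{makestronger}, which reads $2\sum_{n=0}^{\infty}(-1)^n/(e^{(2n+1)\pi}+1) + \sum_{n=0}^{\infty}\text{sech}((2n+1)\pi/2) = \tfrac{1}{2}$. The factor of $2$ cancels the $\tfrac{1}{4}$ against the $\tfrac{1}{2}$ on the right, leaving
\[
\sum_{n=0}^{\infty} \text{sech}\!\left(\frac{2n+1}{2}\pi\right) = \frac{\Gamma^{2}(1/4)}{4\sqrt{2}\,\pi^{3/2}},
\]
which is the desired identity.

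There is no real obstacle here: the derivation of \eqref{20221102234AM1A} from the Fourier expansion of $\text{nc}$ has already been carried out above, the singular value data $k_{1}$ and $K(k_{1})$ are classical, and Berndt's identity is quoted from \cite{Berndt1978Reine}. The only thing to check carefully is the bookkeeping of constants when substituting, and the fact that convergence of all three series in play justifies treating \eqref{makestronger} as an algebraic relation among numerical values.
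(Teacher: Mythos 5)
Your proposal is correct and follows exactly the same route as the paper's proof: specialize \eqref{20221102234AM1A} at $k = k_{1}$ using \eqref{20221102253PM4AAAAApp} and \eqref{Kk1} to evaluate $\sum_{n=0}^{\infty}(-1)^n/(e^{(2n+1)\pi}+1)$, then substitute into Berndt's identity \eqref{makestronger} and solve. The constant bookkeeping checks out.
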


\begin{proof} 
 Using the value for $k_{1}$ in \eqref{20221102253PM4AAAAApp}
 together with the value for $K(k_{1})$ shown in \eqref{Kk1}, 
 the identity in \eqref{20221102234AM1A} gives us that 
 $$ \frac{1}{4}-\frac{\Gamma^2 \left(\frac{1}{4}\right)}{8 \sqrt{2} \pi ^{3/2}} 
 = \sum _{n=0}^{\infty} \frac{(-1)^n}{e^{ (2 n+ 1)\pi}+1}, $$ 
 so that the desired result then follows from \eqref{makestronger}. 
\end{proof}

 Our strengthening of Corollary 4.22 in \cite{Berndt1978Reine}, as in Theorem \ref{stronger}, has not appeared in past publications influenced by 
 \cite{Berndt1978Reine}, which have been based in areas such as number theory 
 \cite{Alkan2015,Dieter1984,HuKimKim2016,Knopp1980,LiuZhang2007,Meyer1997,PengZhang2016,PettetSitaramachandrarao1987,Simsek2003,Zhang2001} 
 mathematical physics \cite{Simsek2010}, high energy physics \cite{Ghilencea2005}, and nuclear physics \cite{DowkerKirsten2002}. The many areas in 
 mathematics and physics related to the discoveries on hyperbolic infinite sums given by Berndt \cite{Berndt1978Reine} inspire the development of further 
 results as in Theorem \ref{stronger}. Since Berndt's formula in \eqref{makestronger} is highlighted as a Corollary in \cite{Berndt1978Reine}, we find it 
 appropriate to highlight the following equivalent formulation of Theorem \ref{stronger} as a Corollary. 

\begin{corollary}
 The $q$-digamma evaluation 
 $$ \psi _{e^{\pi }} \left(\frac{1 - i}{2} \right)-\psi _{e^{\pi }} \left(\frac{1 + i}{2} \right)
 = -i \pi -\frac{i \Gamma^2 \left(\frac{1}{4}\right)}{4 \sqrt{2 \pi }} $$
 holds true.
\end{corollary}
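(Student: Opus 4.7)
My plan is to recognize the stated identity as a $q$-digamma restatement of Theorem \ref{stronger}: plugging the conjugate pair $z = \frac{1-i}{2}, \frac{1+i}{2}$ into \eqref{qdigamma} and taking the difference should collapse the $q$-series to the $\text{sech}$-series of Theorem \ref{stronger}, up to a separately identifiable affine term.

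First I would invoke the series representation in \eqref{qdigamma}, but since $q = e^{\pi} > 1$ that formula does not converge as stated; I would instead use the standard symmetry $\psi_q(z) = \psi_{1/q}(z) + (z - \tfrac{1}{2}) \ln q$ in order to rewrite each $\psi_{e^\pi}$ in terms of $\psi_{e^{-\pi}}$. The affine piece contributes $\pi \bigl[(\tfrac{1-i}{2} - \tfrac{1}{2}) - (\tfrac{1+i}{2} - \tfrac{1}{2})\bigr] = -i\pi$ directly to the difference, and this accounts for the isolated $-i\pi$ summand on the right-hand side. The $-\ln(1 - e^{-\pi})$ constant appearing in each copy of $\psi_{e^{-\pi}}$ obtained from \eqref{qdigamma} then cancels out.

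What remains is to simplify the series part. Writing $b_n = e^{-\pi(n + 1/2)}$, one has $e^{-\pi(n + (1 \mp i)/2)} = \pm i\, b_n$, and a one-line partial-fraction manipulation gives
$$\frac{i b_n}{1 - i b_n} - \frac{-i b_n}{1 + i b_n} = \frac{2 i b_n}{1 + b_n^2}.$$
Since $\dfrac{2 b_n}{1 + b_n^2} = \text{sech}\!\left(\dfrac{2n+1}{2}\pi\right)$, this reduces the series contribution to $-i\pi \sum_{n \geq 0} \text{sech}\!\left(\frac{2n+1}{2}\pi\right)$, and Theorem \ref{stronger} evaluates this as $\Gamma^2(\tfrac{1}{4})/(4\sqrt{2}\, \pi^{3/2})$, producing the remaining $-i\,\Gamma^2(\tfrac{1}{4})/(4\sqrt{2\pi})$ term and yielding the claimed identity.

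The one subtle step, and what I expect to be the main obstacle, is the correct handling of $\psi_q$ when $|q| > 1$. Formula \eqref{qdigamma} is only valid for $|q| < 1$, and missing the $(z - \tfrac{1}{2}) \ln q$ correction that enters under the inversion $q \mapsto 1/q$ would precisely cost the $-i\pi$ term in the identity. With that adjustment in place, the remainder of the proof is a transparent partial-fraction rearrangement followed by a direct appeal to Theorem \ref{stronger}.
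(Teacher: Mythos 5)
Your proof is correct and follows the same route as the paper, which simply asserts that the corollary is ``almost immediately equivalent to Theorem \ref{stronger}, from the definition in \eqref{qdigamma}.'' You supply the details the paper omits --- in particular the observation that \eqref{qdigamma} diverges for $q = e^{\pi} > 1$ and that the inversion convention $\psi_q(z) = \psi_{1/q}(z) + (z - \tfrac{1}{2})\ln q$ is what produces the $-i\pi$ term --- and your partial-fraction reduction to $\sum_{n \ge 0}\mathrm{sech}\bigl(\tfrac{2n+1}{2}\pi\bigr)$ checks out.
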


\begin{proof}
 This is almost immediately equivalent to Theorem \ref{stronger}, 
 from the definition in \eqref{qdigamma}. 
\end{proof}

 Many of our results, with a particular regard toward Theorem \ref{stronger}, are of a similar appearance relative to the formula $$ \sum_{n = 
 0}^{\infty} \text{sech}^{2}\left( \frac{ 2 n + 1 }{2} \pi \right) = \frac{1}{\pi} $$ recorded in \cite{Berndt1978Reine}. As in \cite{Berndt1978Reine}, we 
 record that the above formula has been proved by many different authors
 \cite{KiyekSchmidt1967,Ling1975,Nanjundiah1951,Zucker1979}. 

\begin{theorem}\label{theoremncnonhyp}
 The identity $$ \frac{1}{2}-\frac{ \sqrt{k'} \left(\sqrt{1-k}+\sqrt{1+k}\right) K }{2 \pi } = \sum _{n = 0}^{\infty} \frac{(-1)^{\left\lfloor 
 \frac{n}{2}\right\rfloor }}{e^{ (2n+1) \frac{K'}{K} \pi }+1} $$ holds if the above series converges. 
\end{theorem}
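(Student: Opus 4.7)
The plan is to follow the same strategy used for Theorem \ref{halfRamanujan}, exploiting the Fourier series expansion of $\text{nc}(2wK,k)$ displayed immediately above \eqref{20221102234AM1A}, but now evaluated at $w = \frac{1}{4}$ instead of $w = 0$. The choice $w = \frac{1}{4}$ is precisely what is needed to convert the $(-1)^n$ appearing inside the Fourier coefficient into the sign function $(-1)^{\lfloor n/2 \rfloor}$, as in the proof of Theorem \ref{halfRamanujan}.

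First, I would observe that for $w = \frac{1}{4}$, the values $\cos((2n+1)\pi/4)$ cycle through $\tfrac{\sqrt{2}}{2},\,-\tfrac{\sqrt{2}}{2},\,-\tfrac{\sqrt{2}}{2},\,\tfrac{\sqrt{2}}{2},\ldots$, so that
$$ (-1)^n \cos\!\left( (2n+1) \tfrac{\pi}{4} \right) = \tfrac{\sqrt{2}}{2}\, (-1)^{\lfloor n/2 \rfloor}. $$
Substituting $w = \frac{1}{4}$ into the Fourier expansion
$$ \text{nc}(2wK, k) = \frac{\pi}{2 k' K } \sec(\pi w) - \frac{2\pi}{ k' K } \sum_{n=0}^{\infty} \frac{(-1)^n \cos((2n+1) \pi w)}{ e^{ (2n+1) \frac{K'}{K} \pi } + 1 } $$
then converts the sum on the right into a constant multiple of the target series with the $(-1)^{\lfloor n/2 \rfloor}$ sign function.

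Next, to evaluate the left-hand side, I would use the half-$K$ formula \eqref{halfcn} to compute $\text{cn}(K/2,k)$, and invert it to obtain
$$ \text{nc}\!\left( \tfrac{K}{2}, k\right) = \frac{\sqrt{1+k} + \sqrt{1-k}}{\sqrt{2}\,(1-k^2)^{1/4}} = \frac{\sqrt{1+k} + \sqrt{1-k}}{\sqrt{2}\,\sqrt{k'}}. $$
Together with the elementary value $\sec(\pi/4) = \sqrt{2}$, the substitution $w = \frac{1}{4}$ then yields an equality relating $\text{nc}(K/2,k)$ to a constant term plus a constant multiple of the target series. Rearranging, solving for the sum, and using the identity $k'/\sqrt{k'} = \sqrt{k'}$ to consolidate factors produces the stated identity.

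The main obstacle here is not conceptual — the method is a direct adaptation of the argument for Theorem \ref{halfRamanujan} — but rather keeping track of the $\sqrt{2}$ factors and the $\sqrt{k'}$ vs.\ $k'$ bookkeeping during the final algebraic simplification, together with verification that the sign-cycling argument correctly identifies $\tfrac{\sqrt{2}}{2}\,(-1)^{\lfloor n/2 \rfloor}$ with $(-1)^n \cos((2n+1)\pi/4)$. Both steps are routine, so no substantial new technical difficulty arises beyond what already appears in the proof of Theorem \ref{halfRamanujan}.
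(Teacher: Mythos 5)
Your proposal is correct and follows essentially the same route as the paper's own proof: set $w = \tfrac{1}{4}$ in the Fourier expansion of $\text{nc}(2wK,k)$, use the observation that $(-1)^n \cos\left((2n+1)\tfrac{\pi}{4}\right) = \tfrac{\sqrt{2}}{2}(-1)^{\lfloor n/2\rfloor}$, and evaluate $\text{nc}(K/2,k)$ via the half-$K$ formula \eqref{halfcn} before rearranging. The bookkeeping you flag (the $\sqrt{2}$ and $\sqrt{k'}$ factors) checks out and yields exactly the stated identity.
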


\begin{proof}
 Setting $w = \frac{1}{4}$ in the above series expansion for $ \text{nc}(2 w K, k)$, we find that $ \frac{1}{\text{cn}\left( \frac{k}{2},k\right)} $ admits 
 the following expansion: $$ \frac{ \sqrt{2} \pi }{ \sqrt{1 - k^2} K } \left( \frac{1}{2} - \sum_{n=0}^{\infty} \frac{ (-1)^{ \left\lfloor \frac{n}{2} 
 \right\rfloor } }{ e^{ (2n+1) \frac{K'}{K} \pi } + 1 } \right). $$ According to the half-$K$ identity shown in \eqref{halfcn}, 
 we find that $$ \frac{\sqrt{1-k}+\sqrt{1+k}}{\sqrt{2} \sqrt{k'} } $$ 
 admits the same expansion, and this gives us an equivalent version of the desired result. 
\end{proof}

\begin{example}
 The symbolic form $$ \frac{1}{2}-\frac{\sqrt{1+\sqrt{2}} \Gamma^2 \left(\frac{1}{4}\right)}{8 \pi ^{3/2}} = \sum _{n=0}^{\infty} 
 \frac{(-1)^{\left\lfloor \frac{n}{2}\right\rfloor }}{e^{ (2 n + 1) \pi }+1} $$ holds and may be proved using Theorem \ref{theoremncnonhyp} together with 
 the elliptic singular value indicated in \eqref{Kk1}. This is equivalent to the $q$-digamma evaluation 
 $$ -4 \pi -\sqrt{\frac{1+\sqrt{2}}{\pi }} \Gamma^2 \left(\frac{1}{4}\right) 
 = \psi _{e^{8 \pi }} \left(\frac{1 - i}{8} \right)+\psi _{e^{8 \pi }} \left(\frac{3 - i}{8} \right) - 
 \psi _{e^{8 \pi }} \left(\frac{5 - i}{8} \right)-\psi_{e^{8 \pi }} \left(\frac{7 - i}{8} \right). $$ 
\end{example}

 Ramanujan's closed forms for 
\begin{equation}\label{Ramanujan4t}
 \sum_{n = 0}^{\infty} 
 (-1)^n (2 n + 1)^{-4t - 1} \text{sech}\left( \frac{2n+1}{2} \pi \right) 
\end{equation}
 have been considered, as in \cite{Berndt1978Mag}, as especially notable contributions out of the results given in Ramanujan's notebooks. 
 This further motivates our generalizations of or related to Ramanujan's $\text{sech}$-sums as in 
 \eqref{Ramanujanmainmotivate} and \eqref{Ramanujan4t}. 

\begin{example}
 The symbolic form 
$$ \frac{1}{2}-\frac{\sqrt{\frac{1}{2} \left(2 \sqrt{2}+\sqrt{4+3 \sqrt{2}}\right)} 
 \Gamma^2 \left(\frac{1}{4}\right)}{8 \pi ^{3/2}}
 = \sum_{n = 0}^{\infty} \frac{(-1)^{\left\lfloor \frac{n}{2}\right\rfloor }}{e^{2 (2 n+1) \pi }+1} $$ 
 holds and may be proved using Theorem \ref{theoremncnonhyp} 
 together with the elliptic singular value indicated in \eqref{Kk4}. 
 We may rewrite this a $q$-digamma evaluation in much the same way as before. 
\end{example}

 Ramanujan's many evaluations for infinite hyperbolic sums served as a main source of motivation behind Xu's work in \cite{Xu2018}. In particular, 
 Ramanujan's evaluations whereby $$ \sum_{n = 0}^{\infty} (2 n + 1)^2 \text{sech}\left( \frac{2n+1}{2} \pi \right) = \frac{\Gamma^6 
 \left(\frac{1}{4}\right)}{16 \sqrt{2} \pi ^{9/2}} $$ and $$ \sum _{n=0}^{\infty} (2 n + 1)^2 \text{sech}^2\left( \frac{2n+1}{2} \pi \right) = 
 \frac{\Gamma^8 \left(\frac{1}{4}\right)}{192 \pi ^6} $$ were highlighted as a main sources of motivation concerning the results introduced by 
 Xu in \cite{Xu2018}. 

 By applying term-by-term differentiation with respect to the elliptic modulus in Theorem \ref{theoremncnonhyp}, we can show that the expression 
\begin{align*}
 & \big( K^2 \sqrt{k'} \big(\big(k \big(\sqrt{1-k} - 
 \sqrt{1+k}\big)+2 \big(\sqrt{1-k}+\sqrt{1+k}\big)\big) K-2
 \big(\sqrt{1-k}+\sqrt{1+k}\big) E\big) \big) \big/ \\ 
 & \big(\pi ^2 (K' (E-K) + E' K) \big) 
\end{align*}
 equals $$ \sum _{n=0}^{\infty} 
 (-1)^{\left\lfloor \frac{n}{2}\right\rfloor } (2 n+1) \text{{sech}}^2 \left( \frac{2n+1}{2} \frac{K'}{K} \pi \right) $$ 
 if the above series converges. By direct analogy with the material in Section \ref{subsectionpolyhigh}, 
 we can mimic our above derivation so as to evaluate $$ \sum _{n=0}^{\infty} 
 (-1)^{\left\lfloor \frac{n}{2}\right\rfloor } (2 n+1)^{s} \text{{sech}}^2 \left( \frac{2n+1}{2} \frac{K'}{K} \pi \right) $$ 
 for odd $s$. 

\section{Powers of Jacobi elliptic functions}\label{sectionpowers}
 Using \eqref{nssquared} together with the technique in Section \ref{subsectiontechnique}, we can prove that $$ -\frac{K^2 \left(2 E K k' + E^2 + 
 K^2 \left(k^2 (k'+1)-2 k'-1\right)\right)}{4 \pi ^3 (K' (E-K) + E' K)} $$ equals $$ \sum _{n=1}^{\infty} (-1)^n n^2 \text{{csch}}^2\left(\frac{2 n 
 \pi K'}{K}\right) $$ if the above series converges. For example, this gives us that $$ -\frac{1}{32 \pi ^2}-\frac{\left(1+\sqrt{2}\right) 
 \Gamma^4 \left(\frac{1}{4}\right)}{128 \pi ^4}+\frac{\left(1+\sqrt{2}\right) \Gamma^8 \left(\frac{1}{4}\right) }{2048 \pi ^6} = \sum _{n = 
 1}^{\infty} (-1)^n n^2 \text{csch}^2(2 n \pi). $$ We leave it to a separate project to generalize this result and to investigate the use of our method 
 together with Fourier series expansions apart from \eqref{nssquared} that involve powers of Jacobi elliptic functions. 

\section{Further considerations}\label{sectionconclusion}
 The computation of the inverse functions among \eqref{definitionsn}--\eqref{definitiondn} 
 often turns out to be difficult, even with state-of-the-art Computer Algebra Systems \cite{Batista2019}. 
 For example, there is no direct way of 
 using built-in Mathematica commands such as 
 {\tt JacobiSN}, {\tt JacobiCN}, or {\tt JacobiDN} 
 to compute the ``actual'' Jacobi elliptic functions defined via \eqref{definitionsn}--\eqref{definitiondn} relative to the 
 Mathematica definition for the incomplete elliptic $F$-integral
\begin{equation*}
 \int_{0}^{\phi} \frac{dt}{\sqrt{1 - k \sin^2 t}} 
\end{equation*}
 with arguments $\phi$ and $k$, which is in contrast to \eqref{actualF}. This kind of practical computational problem motivates the development of 
 new and efficient ways of expressing and applying the Jacobi elliptic functions in the form of series expansions involved in this article. 

 Ramanujan's identity in \eqref{instanceRamanujan}  was applied by Berndt in \cite{Berndt2016}  to prove the following remarkable closed form: 
 $$ \int_{-\infty}^{\infty} \frac{dx}{ \cos \sqrt{x} + \cosh \sqrt{x}}  = \frac{\pi}{4} \frac{ \Gamma^{2}\left( \frac{1}{4} \right) }{ \Gamma^{2}\left( 
 \frac{3}{4} \right)}. $$ How can we obtain similar results using our evaluations related to 
 Ramanujan's formula in \eqref{instanceRamanujan}, as in Section \ref{Mainsectionnc}? 
 We leave this for a separate project. 
 Also, we leave it to a separate project to apply our technique using 
 identities such as the third-$K$ formula, as opposed to the half-$K$ formula we have applied. 

\subsection*{Acknowledgements}
 The author wants to express his sincere thanks 
 to Alexey Kuznetsov for many useful discussions.

 \

\noindent John M.\ Campbell

\noindent Department of Mathematics

\noindent Toronto Metropolitan University

\noindent {\tt jmaxwellcampbell@gmail.com}

\end{document}